\documentclass[11pt,reqno]{amsart}
\usepackage{amsmath,amsthm,amssymb,mathrsfs}
\usepackage[tmargin=1.15in,bmargin=1.15in,rmargin=1.2in,lmargin=1.2in]{geometry}
\usepackage[breaklinks=true]{hyperref}
\allowdisplaybreaks

\theoremstyle{plain}
\newtheorem{theorem}{Theorem}[section]

\renewcommand{\subset}{\subseteq}

\newtheorem{cor}[theorem]{Corollary}

\newtheorem{proposition}[theorem]{Proposition}
\newtheorem{lemma}[theorem]{Lemma}

\theoremstyle{definition}
\newtheorem{definition}[theorem]{Definition}
\newtheorem{remark}[theorem]{Remark}
\newtheorem{example}[theorem]{Example}

\numberwithin{equation}{section}



\newcommand{\bm}{\mathbf{m}}

\newcommand{\bu}{\mathbf{u}}
\newcommand{\bv}{\mathbf{v}}

\newcommand\R{\mathbb{R}}
\newcommand\Z{\mathbb{Z}}
\newcommand\F{\mathbb{F}}

\renewcommand{\geq}{\geqslant}
\renewcommand{\leq}{\leqslant}

\newcommand\perm {\mathop{\rm perm}}

\begin{document}

\title[From Cauchy's determinant formula to bosonic/fermionic immanant
identities]{From Cauchy's determinant formula to\\
bosonic and fermionic immanant identities}

\author{Apoorva Khare}
\address[A.~Khare]{Department of Mathematics, Indian Institute of
Science, Bangalore -- 560012, India; and Analysis and Probability
Research Group, Bangalore -- 560012, India}
\email{\tt khare@iisc.ac.in}

\author{Siddhartha Sahi}
\address[S.~Sahi]{Department of Mathematics, Rutgers University,
Piscataway 08854, USA}
\email{\tt sahi@math.rutgers.edu}

\date{\today}

\keywords{Schur polynomial, symmetric group, character, immanant,
determinantal identity, signed permutations, 
symmetric functions, bosonic variables, fermionic variables}

\subjclass[2010]{Primary 05E05; 
Secondary 15A15, 
15A24, 
20C15 
}

\begin{abstract}
Cauchy's determinant formula (1841) involving $\det ((1-u_i v_j)^{-1})$
is a fundamental result in symmetric function theory. It has been
extended in several directions, including a determinantal extension by
Frobenius [\textit{J.\ reine angew.\ Math.}\ 1882] involving a sum of two
geometric series in $u_i v_j$. This theme also resurfaced in a matrix
analysis setting in a paper by Horn
[\textit{Trans.\ Amer.\ Math.\ Soc.}\ 1969] --
where the computations are attributed to Loewner -- and in recent works
by Belton--Guillot--Khare--Putinar [\textit{Adv.\ Math.}\ 2016] and
Khare--Tao [\textit{Amer.\ J.\ Math.}\ 2021]. These formulas were
recently unified and extended in [\textit{Trans.\ Amer.\ Math.\ Soc.}\
2022] to arbitrary power series, with commuting/bosonic variables
$u_i, v_j$.

In this note we formulate analogous permanent identities, and in fact,
explain how all of these results are a special case of a more general
identity, for any character -- in fact, any complex class function -- of
any finite group that acts on the bosonic variables $u_i$ and on the
$v_j$ via signed permutations. (We explain why larger linear groups do
not work, via a -- perhaps novel -- ``symmetric function''
characterization of signed permutation matrices that holds over any
integral domain.) We then provide fermionic analogues of these formulas,
as well as of the closely related Cauchy product identities.
\end{abstract}

\maketitle

\section{Introduction}

\subsection{Post-1960 results: Entrywise positivity preservers and Schur
polynomials}\label{Smodern}

The goal of this note is to extend some classical and modern symmetric
function determinantal identities to other characters of the symmetric
group (and its subgroups), and then to formulate and show fermionic
counterparts of these. The origins of this work lie in classical
identities by Cauchy and Frobenius, but also in a computation -- see
Theorem~\ref{Tloewner} -- that originally appears in a letter by Charles
Loewner to Josephine Mitchell on October 24, 1967 (as observed by the
first-named author in the Stanford Library archives). Subsequently, this
computation, and the broader result on ``entrywise functions,'' appeared
in print in the thesis of Loewner's PhD student, Roger Horn -- see also
the proof of \cite[Theorem 1.2]{horn}, which Horn attributes to Loewner.

In his letter, Loewner explained that he was interested in understanding
functions acting \textit{entrywise} on positive semidefinite matrices
(i.e., real symmetric matrices with non-negative eigenvalues) of a fixed
size, and preserving positivity. Previously, results by Schur,
Schoenberg, and Rudin had classified the dimension-free preservers, i.e.,
the entrywise maps preserving positivity in \textit{all} dimensions
\cite{Schur1911, Schoenberg42, Rudin59}. In contrast, in a fixed
dimension $d$, such a classification remains open to date, even for
$d=3$; moreover, Loewner's 1967 result is still state-of-the-art, in that
it is (essentially) the only known necessary condition for a general
entrywise function preserving positivity in a fixed dimension. We refer
the reader to e.g.\ \cite{horndet} for more details.

The present work begins by isolating from Loewner's positivity/analysis
result, the following algebraic calculation. Fix an integer $n \geq 2$;
given a matrix $A = (a_{ij})$, here and below $f[A]$ denotes the matrix
with $(i,j)$-entry $f(a_{ij})$.

\begin{theorem}[Loewner]\label{Tloewner}
Suppose $f : \R \to \R$ is a smooth function, $n \geq 2$, and let
$\bu = (u_1, \dots, u_n)^T \in \R^n$. Define the determinant function
\[
\Delta : \R \to \R, \qquad t \mapsto \det (f(t u_i u_j))_{i,j=1}^n = \det
f[ t \bu \bu^T ].
\]
Then $\Delta(0) = \cdots = \Delta^{\binom{n}{2} - 1}(0) = 0$, and the
next derivative is
\begin{equation}\label{Eloewner}
\Delta^{\binom{n}{2}}(0) = \binom{\binom{n}{2}}{1,2,\dots,n-1}
\prod_{i<j} (u_j - u_i)^2 \cdot f(0) f'(0) \cdots f^{(n-1)}(0).
\end{equation}
\end{theorem}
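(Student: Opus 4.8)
The plan is to reduce everything to a Taylor expansion of $f$ at the origin combined with a Cauchy--Binet factorization of the matrix $f[t\bu\bu^T]$. Set $N = \binom{n}{2}$ and write the Taylor expansion $f(x) = \sum_{k=0}^{N} c_k x^k + R_N(x)$ with $c_k = f^{(k)}(0)/k!$; Taylor's theorem gives $R_N(x) = O(x^{N+1})$ near $0$. Since
\[
\Delta(t) = \sum_{\tau \in \Sym(n)} \operatorname{sgn}(\tau) \prod_{i=1}^n f(t u_i u_{\tau(i)}),
\]
expanding each factor shows that any monomial involving at least one remainder term is $O(t^{N+1})$, and so does not affect the derivatives $\Delta^{(m)}(0)$ for $m \le N$. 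I may therefore replace $f$ by the polynomial $\sum_{k=0}^N c_k x^k$ throughout.

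With this replacement the $(i,j)$ entry becomes $\sum_{k=0}^N c_k t^k u_i^k u_j^k$, so the matrix factors as $M(t) = P\, D(t)\, P^T$, where $P = (u_i^k)_{1 \le i \le n,\, 0 \le k \le N}$ is the $n \times (N+1)$ generalized Vandermonde matrix and $D(t) = \operatorname{diag}(c_0, c_1 t, \dots, c_N t^N)$. This factorization is the crux of the argument. Applying the Cauchy--Binet formula to $\det(P\, D(t)\, P^T)$ and summing over $n$-element subsets $S = \{k_1 < \cdots < k_n\} \subseteq \{0,1,\dots,N\}$ yields
\[
\Delta(t) = \sum_{S} \Big(\prod_{k \in S} c_k t^k\Big)\, \big(\det (u_i^{k_\ell})_{i,\ell}\big)^2,
\]
since the two minors extracted from $P$ and from $P^T$ coincide (one is the transpose of the other).

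Each term carries a factor $t^{\sum_{k \in S} k}$, and among $n$-element subsets of $\{0,1,2,\dots\}$ the exponent $\sum_{k \in S} k$ is minimized uniquely by $S = \{0,1,\dots,n-1\}$, where it equals $\binom{n}{2}$. Hence $\Delta(t) = a\, t^{\binom{n}{2}} + O(t^{\binom{n}{2}+1})$, which immediately gives $\Delta(0) = \cdots = \Delta^{(\binom{n}{2}-1)}(0) = 0$ together with $\Delta^{(\binom{n}{2})}(0) = \binom{n}{2}!\cdot a$. The leading coefficient is $a = c_0 c_1 \cdots c_{n-1}\, \big(\det(u_i^{\ell})_{1 \le i \le n,\, 0 \le \ell \le n-1}\big)^2$, and the minor appearing here is the classical Vandermonde determinant $\prod_{i<j}(u_j - u_i)$.

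Finally I would substitute $c_\ell = f^{(\ell)}(0)/\ell!$, so that $c_0 c_1 \cdots c_{n-1} = f(0) f'(0) \cdots f^{(n-1)}(0)\big/\prod_{\ell=0}^{n-1}\ell!$, and combine with the factor $\binom{n}{2}!$; since $\binom{n}{2} = 1 + 2 + \cdots + (n-1)$, the ratio $\binom{n}{2}!\big/\prod_{\ell=1}^{n-1}\ell!$ is exactly the multinomial coefficient $\binom{\binom{n}{2}}{1,2,\dots,n-1}$, recovering \eqref{Eloewner}. The only point requiring genuine care is the passage from the smooth $f$ to its Taylor polynomial, i.e.\ checking that the remainder contributes only at orders above $\binom{n}{2}$; this is routine given Taylor's theorem. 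The substantive step is recognizing the $P\, D(t)\, P^T$ structure that makes Cauchy--Binet produce the squared Vandermonde.
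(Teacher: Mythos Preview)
Your argument is correct. The Taylor truncation is justified exactly as you say (by the Leibniz rule, only derivatives $f^{(k)}(0)$ with $k\le N$ enter into $\Delta^{(m)}(0)$ for $m\le N$), the factorization $M(t)=P\,D(t)\,P^T$ is valid, Cauchy--Binet gives the sum over $n$-subsets $S$, and the unique minimizer $S=\{0,1,\dots,n-1\}$ produces the Vandermonde squared and the multinomial coefficient.

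The paper does not give a stand-alone proof of this theorem; it is quoted as Loewner's result and then subsumed by the formal power-series identity~\eqref{Esymm} (Theorem~\ref{Tsymm}), which the paper in turn re-derives as the sign-character case of Theorem~\ref{Teven}. That proof is quite different in spirit: rather than Cauchy--Binet, it starts from the trivial identity $\prod_i f(tu_iv_i)=\sum_{\bm}t^{|\bm|}f_{\bm}\bu^{\bm}\bv^{\bm}$ and applies the idempotent $e_\chi$ in the group algebra of $G\subset S_n$ to both sides, exploiting the fact that acting by $e_\chi(\bu)$ or by $e_{\overline\chi}(\bv)$ on the left side gives the same thing. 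Your approach is more elementary and entirely self-contained for this particular determinant formula, and it also handles the passage from smooth $f$ to its Taylor polynomial explicitly---a point the paper does not address since it works with formal series. The paper's approach, on the other hand, buys uniformity: the same argument yields the analogous identity for every character of every subgroup of $S_n$, not just the determinant.
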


In particular, if $f(t)$ is a convergent power series $\sum_{n \geq 0}
f_n t^n$, then within a suitable radius of convergence,
\[
\Delta(t) = t^{\binom{n}{2}} \prod_{i<j} (u_j - u_i)^2 \cdot f_0 f_1
\cdots f_{n-1} + \text{ higher order terms}.
\]

The first term on the right-hand side of Equation~\eqref{Eloewner} is a
multinomial coefficient, and the reader will recognize the next product
as the square of a Vandermonde determinant for the matrix with entries
$u_i^{n-j}, \ 1 \leq i,j \leq n$. What the reader may find harder to
recognize is that Equation~\eqref{Eloewner} contains a ``hidden'' Schur
polynomial (these are defined presently) in the variables $u_i$: the
simplest of them all, $s_{(0,\dots,0)}(\bu) = 1$. In particular,
if one goes even one derivative beyond Loewner's stopping point, one
immediately uncovers other, nontrivial Schur polynomials. This is stated
precisely in Theorem~\ref{Tsymm}.

The presence of the lurking (simplest) Schur polynomial
in~\eqref{Eloewner} was suspected owing to very recent sequels to
Loewner's matrix positivity result. First with Belton--Guillot--Putinar
\cite{BGKP-fixeddim} and then with Tao \cite{KT}, the first-named author
found (the first) examples of polynomial maps with at least one negative
coefficient, which preserve positivity in a fixed dimension when applied
entrywise. These papers uncovered novel connections between polynomials
that entrywise preserve positivity and Schur polynomials, and in
particular, obtained expansions for $\det f[t \bu \bv^T]$ in terms of
Schur polynomials, for all polynomials $f(t)$. This suggested revisiting
the general case due to Loewner (in slightly greater generality, as
above: for $\det f[t \bu \bv^T]$).

\subsection{Pre-1900 results: Cauchy and Frobenius}

We now go back in history and remind the reader of the first such
determinantal identities involving Schur polynomials.
Recall the well-known Cauchy determinant identity \cite{Cauchy},
\cite[Chapter I.4, Example 6]{Macdonald}: if $B$ is the $n \times n$
matrix with entries $(1 - u_i v_j)^{-1} = \sum_{M \geq 0} (u_i v_j)^M$
for variables $u_i, v_j$ with $1 \leq i, j \leq n$, then
\begin{equation}\label{Ecauchy}
\det B = V(\bu) V(\bv) \sum_\bm s_\bm(\bu) s_\bm(\bv),
\end{equation}

\noindent where $V(\bu)$ for a finite tuple $\bu = (u_i)_{i \geq 1}$
denotes the ``Vandermonde determinant'' $\prod_{i < j} (u_j - u_i)$,
and the sum runs over all partitions $\bm$ with at most $n$ parts. Here,
a partition $\bm = (m_1, \dots, m_n)$ simply means a weakly decreasing
sequence of nonnegative integers $m_1 \geq \cdots \geq m_n \geq 0$; and
we use Cauchy's definition \cite{Jacobi} for the Schur polynomial
$s_\bm(\bv)$, namely,
\[
s_\bm(v_1, \dots, v_n) :=
\frac{\det (v_j^{m_i + n-i})_{i,j=1}^n}{\det (v_j^{n-i})_{i,j=1}^n}.
\]
(This definition differs from that in the literature, e.g.\ in
\cite{Macdonald}.) Here and below, we restrict to $n$ arguments $v_j$,
to go with the $n$ exponents $m_i$.

See also \cite[Section 5]{Ku} and the references therein, as well as
\cite{IIO, IOTZ, Kr1, Kr2, LLT, Macdonald} for other determinantal
identities involving symmetric functions.

As discussed in Section~\ref{Smodern}, in this paper we focus on the
specific form of the determinant in~\eqref{Ecauchy}, i.e.\ where one
applies to all $u_i v_j$ some power series (Equation~\eqref{Ecauchy}
considers the case of $f(x) = 1/(1-x) = \sum_{M \geq 0} x^M$), and then
computes the determinant. For instance, if $f(x)$ has fewer than $n$
monomials then $f[ \bu \bv^T ]$ is a sum of fewer than $n$ rank-one
matrices, hence is singular. (For more general polynomials -- as
mentioned above -- the formula was worked out in \cite{KT}.) Another such
formula was shown by Frobenius \cite{Fr}, in fact in greater
generality.\footnote{Here one uses theta functions and obtains elliptic
Frobenius--Stickelberger--Cauchy determinant (type) identities; see also
\cite{Amd,FS}.} The formula appears in Rosengren--Schlosser
\cite[Corollary 4.7]{RS} as well, as a consequence of their Theorem~4.4;
and it implies a more general determinantal identity
than~\eqref{Ecauchy}, with $(1-cx)/(1-x)$ replacing $1/(1-x)$
and the sum again running over all partitions with at most $n$ parts:
\begin{align}\label{Eschlosser}
&\ \det \left( \frac{1 - c u_i v_j}{1 - u_i v_j} \right)_{i,j=1}^n\\
= &\ V(\bu) V(\bv) (1-c)^{n-1} \left( \sum_{\bm\; :\; m_n = 0} s_\bm(\bu)
s_\bm(\bv) + (1-c) \sum_{\bm\; :\; m_n > 0} s_\bm(\bu) s_\bm(\bv)
\right).
\notag
\end{align}

\subsection{The present work}

Given the many precursors listed above, it is natural to seek a more
general identity, i.e.\ the expansion of $\det f[\bu \bv^T]$, where
$f[\bu \bv^T]$ is the entrywise application of an arbitrary (formal)
power series $f$ to the rank-one matrix $\bu \bv^T = (u_i
v_j)_{i,j=1}^n$. This question was recently answered by the first-named
author -- including additional special cases -- again in the context of
matrix positivity preservers. 

\begin{theorem}[Khare, \cite{horndet}]\label{Tsymm}
Fix a unital commutative ring $R$ and let $t$ be an indeterminate. Let
$f(t) := \sum_{M \geq 0} f_M t^M \in R[[t]]$ be an arbitrary formal power
series. Given vectors $\bu, \bv \in R^n$ for some $n \geq 1$, we have:
\begin{equation}\label{Esymm}
\det f[ t \bu \bv^T ] = V(\bu) V(\bv) \sum_{M \geq 0} t^{M +
\binom{n}{2}}  \sum_{\bm = (m_1, \dots, m_n) \; \vdash M} s_\bm(\bu)
s_\bm(\bv) \prod_{i=1}^n f_{m_i + n-i},
\end{equation}
where $\bm \vdash M$ means that $\bm$ is a partition whose components sum
to $M$.
\end{theorem}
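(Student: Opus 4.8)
The plan is to factor the matrix $f[t\bu\bv^T]$ as a product of three (infinite) matrices and apply the Cauchy--Binet formula. Writing each entry as $f(t u_i v_j) = \sum_{M \geq 0} f_M t^M u_i^M v_j^M$, I would set $U := (u_i^M)$ with rows indexed by $i \in \{1,\dots,n\}$ and columns by $M \geq 0$, the diagonal matrix $D := \mathrm{diag}(f_M t^M)_{M \geq 0}$, and $W := (v_j^M)$ of the same shape as $U$, so that $f[t\bu\bv^T] = U D W^T$. Expanding the determinant by multilinearity in the columns reduces it to a sum over tuples $(M_1, \dots, M_n)$, and the generalized Vandermonde $\det(u_i^{M_j})_{i,j}$ vanishes unless the $M_j$ are distinct; collecting the orderings of each value-set then collapses the sum onto $n$-element subsets $S \subseteq \{0,1,2,\dots\}$, which is precisely the Cauchy--Binet expansion. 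Since we work in $R[[t]]$ and each fixed power of $t$ receives contributions from only finitely many subsets $S$, this a priori infinite expansion is legitimate coefficient-by-coefficient; I would flag this as the one point meriting care.

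Next I would set up the standard bijection between such subsets and partitions. Writing $S = \{a_1 > a_2 > \dots > a_n \geq 0\}$ and putting $m_k := a_k - (n-k)$, the strict inequalities $a_k > a_{k+1}$ translate exactly into $m_1 \geq m_2 \geq \dots \geq m_n \geq 0$, so $S \leftrightarrow \bm$ is a bijection onto partitions with at most $n$ parts. Under this correspondence the summand attached to $S$ is $\bigl(\prod_{k} f_{a_k} t^{a_k}\bigr)\, \det(u_i^{a_k})_{i,k}\, \det(v_j^{a_k})_{j,k}$. The power of $t$ is $\prod_k t^{a_k} = t^{\sum_k (m_k + n - k)} = t^{\,|\bm| + \binom{n}{2}}$, and the coefficient factor is $\prod_k f_{a_k} = \prod_{i} f_{m_i + n - i}$, matching the two scalar factors in~\eqref{Esymm}.

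The third step identifies each generalized Vandermonde with a Schur polynomial. By Cauchy's bialternant definition quoted above, and using that the determinant is transpose-invariant, $\det(u_i^{a_k})_{i,k} = \det(u_i^{m_k + n - k})_{i,k} = s_\bm(\bu)\,\det(u_i^{n-k})_{i,k}$, and likewise for $\bv$. The remaining point is that the pure Vandermonde $\det(u_i^{n-k})_{i,k}$ equals $(-1)^{\binom{n}{2}} V(\bu)$, since it is obtained from the standard Vandermonde $V(\bu) = \det(u_i^{k-1})_{i,k}$ by reversing the order of the columns; the analogous statement holds for $\bv$. The two signs $(-1)^{\binom{n}{2}}$ multiply to $1$, so the product of the two pure Vandermondes is exactly $V(\bu) V(\bv)$. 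All of this consists of polynomial identities with integer coefficients, hence holds verbatim over the arbitrary commutative ring $R$, with $s_\bm$ interpreted as the integral polynomial it in fact is.

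Assembling the pieces, the summand for $\bm$ becomes $V(\bu) V(\bv)\, s_\bm(\bu) s_\bm(\bv)\, t^{\,|\bm| + \binom{n}{2}} \prod_{i} f_{m_i + n - i}$; grouping the partitions by their size $M = |\bm|$ and pulling $V(\bu)V(\bv)$ outside the sum yields~\eqref{Esymm}. I expect the only genuine obstacle to be the first step, namely justifying the infinite Cauchy--Binet expansion in $R[[t]]$ (checking that for each $N$ only finitely many subsets $S$ contribute to the coefficient of $t^N$, so that multilinearity may be applied term-by-term); the subset--partition bijection, the bialternant identification, and the sign cancellation are then routine bookkeeping.
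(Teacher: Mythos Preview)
Your argument is correct: the factorization $f[t\bu\bv^T]=UDW^T$ followed by Cauchy--Binet (equivalently, multilinearity in the columns) is a clean and classical route to~\eqref{Esymm}, and your handling of the convergence in $R[[t]]$, the subset--partition bijection, the bialternant identification, and the sign $(-1)^{\binom{n}{2}}\cdot(-1)^{\binom{n}{2}}=1$ is accurate.

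That said, the paper does \emph{not} prove Theorem~\ref{Tsymm} this way. Instead it derives~\eqref{Esymm} as the $G=S_n$, $\chi=\mathrm{sign}$ specialization of the general immanant identity~\eqref{Egeneral} (Theorem~\ref{Teven}), whose proof is representation-theoretic: one starts from the trivial expansion $\prod_i f(tu_iv_i)=\sum_{\bm\geq 0}t^{|\bm|}f_\bm\bu^\bm\bv^\bm$ and applies the idempotent $e_\chi(\bu)$ (equivalently $e_{\overline\chi}(\bv)$) in the $G\times G$-module $\ker(\Euler_\bu-\Euler_\bv)$, using $e_\chi^2=e_\chi$ to replace a single idempotent by the product $e_\chi(\bu)e_{\overline\chi}(\bv)$. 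Your approach is more elementary and self-contained for the determinant case, and avoids any appeal to group-algebra idempotents or divisibility by $|G|$; the paper's approach, by contrast, yields in one stroke the analogous identity for \emph{every} character of every subgroup $G\leq S_n$ (and hence the permanent formula~\eqref{Eperm} and the fermionic variants), which your Cauchy--Binet argument does not immediately give.
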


The goal of this short note is to show that these identities hold more
generally -- not just for determinants, but also e.g.\ for permanents,
where
\[
\perm(A_{n \times n}) := \sum_{\sigma \in S_n} a_{1 \sigma(1)} \cdots
a_{n \sigma(n)}.
\]
Thus we show below:

\begin{theorem}\label{Tperm}
With notation as in Theorem~\ref{Tsymm} (and over any commutative unital
ring $R$), we have:
\[
n! \perm f[ t \bu \bv^T ] = \sum_{{\bf m} \in
\Z_{\geq 0}^n} t^{m_1 + \cdots + m_n} \perm(\bu^{\circ {\bf m}})
\perm(\bv^{\circ {\bf m}}) \prod_{i=1}^n f_{m_i},
\]
where $\bv^{\circ {\bf m}} := (v_j^{m_i})$ (and similarly
for $\bu^{\circ {\bf m}}$), and ${\bf m} \geq {\bf 0}$ is interpreted
coordinatewise.
\end{theorem}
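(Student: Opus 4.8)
The plan is to expand the permanent by its definition, substitute the entrywise power series, and then symmetrize over the exponent vector; in contrast to the determinantal case, no cancellation or Vandermonde bookkeeping is needed. Everything takes place in $R[[t]]$ (with the $u_i, v_j$ adjoined), so all rearrangements are formal and legitimate. First I would write
\begin{align*}
\perm f[ t \bu \bv^T ]
&= \sum_{\sigma \in S_n} \prod_{i=1}^n f(t u_i v_{\sigma(i)})
= \sum_{\sigma \in S_n} \prod_{i=1}^n\ \sum_{m_i \geq 0} f_{m_i}\, t^{m_i} u_i^{m_i} v_{\sigma(i)}^{m_i}\\
&= \sum_{\sigma \in S_n}\ \sum_{\bm \in \Z_{\geq 0}^n} t^{m_1 + \cdots + m_n} \prod_{i=1}^n f_{m_i}\, u_i^{m_i}\, v_{\sigma(i)}^{m_i},
\end{align*}
where expanding the product of the $n$ power series introduces an independent exponent $m_i$ for each factor, collected into $\bm = (m_1, \dots, m_n)$.

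Next I would interchange the two summations and carry out the sum over $\sigma$ first. For fixed $\bm$ the factor $t^{m_1 + \cdots + m_n} \prod_i f_{m_i} u_i^{m_i}$ does not involve $\sigma$, and the $\sigma$-dependent remainder is
\[
\sum_{\sigma \in S_n} \prod_{i=1}^n v_{\sigma(i)}^{m_i} = \perm(\bv^{\circ \bm}),
\]
directly from the definition of the permanent of the matrix $\bv^{\circ \bm} = (v_j^{m_i})_{i,j}$. This yields the intermediate formula $\perm f[t \bu \bv^T] = \sum_{\bm} t^{m_1 + \cdots + m_n} \bigl( \prod_i f_{m_i} u_i^{m_i} \bigr) \perm(\bv^{\circ \bm})$, which is correct but not yet in the symmetric shape of the statement.

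The one step requiring (mild) care is the final symmetrization. I would observe that of the ingredients indexed by $\bm$, three are invariant under permuting the coordinates of $\bm$: the monomial $t^{m_1 + \cdots + m_n}$ and the product $\prod_i f_{m_i}$ obviously, and $\perm(\bv^{\circ \bm})$ because permuting the rows of $\bv^{\circ \bm}$ (equivalently, the exponents $m_i$) leaves a permanent unchanged. Only $\prod_i u_i^{m_i}$ fails to be symmetric in $\bm$. Averaging the entire sum over the re-indexing $\bm \mapsto \rho \cdot \bm$, $\rho \in S_n$ — a bijection of $\Z_{\geq 0}^n$, so that re-indexing by any fixed $\rho$ leaves the total sum unchanged — replaces $\prod_i u_i^{m_i}$ by its average $\tfrac{1}{n!} \sum_{\rho \in S_n} \prod_i u_{\rho(i)}^{m_i} = \tfrac{1}{n!} \perm(\bu^{\circ \bm})$, and produces exactly the claimed identity. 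I expect this symmetrization — rather than any hard estimate — to be the only conceptual point: unlike the determinant, where antisymmetry forces distinct exponents and collapses the sum to partitions (with Vandermonde factors emerging, as in Theorem~\ref{Tsymm}), here the symmetry of the permanent simply supplies the factor $1/n!$ while the sum ranges over all of $\Z_{\geq 0}^n$.
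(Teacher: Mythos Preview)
Your argument is correct. The expansion of the permanent, the identification of $\sum_{\sigma}\prod_i v_{\sigma(i)}^{m_i}$ with $\perm(\bv^{\circ\bm})$, and the averaging over relabelings of $\bm$ all go through exactly as you describe; the only point needing a word of justification --- that $\perm(\bv^{\circ\bm})$ is invariant under permuting the coordinates of $\bm$ --- you supply.

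The paper, however, does not argue this way. It proves a single identity (Theorem~\ref{Teven}) valid for \emph{every} irreducible character $\chi$ of every subgroup $G\leq S_n$, by starting from the diagonal expansion $\prod_i f(tu_iv_i)=\sum_{\bm}t^{|\bm|}f_{\bm}\bu^{\bm}\bv^{\bm}$ and applying the group-algebra idempotent $e_\chi(\bu)$. The crux there is the observation that $e_\chi(\bu)$ and $e_{\overline\chi}(\bv)$ act identically on $\prod_i f(tu_iv_i)$, so by idempotency $e_\chi(\bu)=e_\chi(\bu)^2=e_\chi(\bu)e_{\overline\chi}(\bv)$ on that expression; this simultaneously symmetrizes (or antisymmetrizes, etc.) in both variable sets. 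Your ``average over the re-indexing $\bm\mapsto\rho\cdot\bm$'' is precisely this mechanism, unwound for the trivial character: averaging over $\bm$ is the same as applying $e_{\mathrm{triv}}(\bu)$ to $\bu^{\bm}$, since everything else in the summand is $S_n$-invariant in $\bm$. So the two proofs share the same pivot, but your route is a hands-on computation tailored to the permanent, while the paper's is a uniform representation-theoretic argument that yields the determinant, the permanent, and all other immanants at once.
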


We show this result as well as Theorem~\ref{Tsymm} by a common
proof. In fact we go beyond permanents: we provide such an
identity for an arbitrary character of an arbitrary subgroup $G$ of $S_n$
-- and even of the hyperoctahedral group: $G \leq S_2 \wr S_n$.
Thus, our proof differs from the approach in \cite{horndet}, and proceeds
via group representation theory. We then produce a fermionic analogue of
the bosonic immanant ``master identity,'' in which the variables $u_i$
anti-commute, as do the $v_j$. For quick reference, these identities are
summarized in the following table.

{\renewcommand{\arraystretch}{1.3}
\begin{table}[h]
\begin{tabular}{|c|c|c|}
\hline
& Even (bosonic) variables & Odd (fermionic) variables\\
\hline
Determinant (for $S_n$) & \eqref{Edet} (see \cite{horndet}) &
\eqref{Edetodd} \\
\hline
Permanent (for $S_n$) & \eqref{Eperm} & \eqref{Epermodd} \\
\hline
Arbitrary immanants & & \\
for subgroups of $S_2 \wr S_n$ & \eqref{Egeneral0} &
\eqref{Egeneral-odd} \\
\hline
(Bi)Product identities & \eqref{Eprodeven} (see e.g.\ \cite{Macdonald}) &
\eqref{Eprododd} \\
\hline
\end{tabular}\bigskip
\caption{The first three rows provide formulas for an arbitrary formal
power series applied entrywise to the matrix $t \bu \bv = (t u_i
v_j)_{i,j=1}^n$. The fourth row computes the product of $(1 - u_i
v_j)^{-1}$ or of $(1 + u_i v_j)$. Two of these formulas can be found in
earlier literature, see \cite{horndet,Macdonald}.}
\end{table}}

\section{Immanant identities for bosonic variables}

\subsection{Establishing the setting}

In order to state and prove our main results for arbitrary immanants of
complex characters -- and more generally, for their linear combinations
(i.e., class functions) -- we first establish the setting in which our
results hold.

\subsubsection{Ground ring -- contains character values}

The first step is to explain the degree of freedom in choosing the ground
ring. Fix an integer $n \geq 1$ and a unital commutative subring $R$.
Recall that given a complex character $\chi$ of the permutation group
$S_n$, the \textit{immanant} of a square matrix $A_{n \times n}$ -- as
defined by Littlewood and Richardson \cite{LR} -- is
\[
{\rm Imm}_\chi(A) := \sum_{\sigma \in S_n} \chi(\sigma)
\prod_{i=1}^n a_{i, \sigma(i)}.
\]
For this to act on $f(t u_i v_j)$ with $f \in R[[t]]$, we need $R$ to
``contain'' the character values of $\chi$. This is made precise as
follows:

\begin{definition}
Given a finite group $G$ and a complex character -- or class function --
$\psi := \sum_{\chi \in \widehat{G}_\mathbb{C}} a_\chi \chi$ (where the
sum runs over the irreducible complex characters of $G$), define the
ring $R_\psi \subset \mathbb{C}$ to be the unital subring generated by
all character values and all coefficients that occur in $\psi$:
\begin{equation}\label{Echarring}
R_\psi := \Z \left[ \{ \chi(g) : g \in G, \chi \in
\widehat{G}_\mathbb{C}, a_\chi \neq 0 \} \; \cup \; \{ a_\chi : \chi \in
\widehat{G}_\mathbb{C} \} \right] \subset \mathbb{C}.
\end{equation}
\end{definition}

In this paper we will work over arbitrary commutative $R_\psi$-algebras
$R$. For instance, if $\psi = \chi$ is the determinant or permanent, then
$R_\psi = \mathbb{Z}$, and so the determinant and permanent identities
will hold over every commutative $\Z$-algebra -- i.e.\ unital commutative
ring -- $R$, for all power series $f \in R[[t]]$. (Hence the theorems
above are stated over all $R$.)

\subsubsection{The Segre subalgebra -- contains the immanant}

The other point to establish is the algebra in which our immanant
identities are to hold. For this, we begin by reminding the reader that
we will work with arbitrary subgroups of the group of signed permutations
$S_2 \wr S_n$. (To understand this choice of group $S_2 \wr S_n$ and not
a larger one, see Section~\ref{Srootsof1}.) Now suppose $u_1, \dots, u_n,
v_1, \dots, v_n$ are commuting variables. Recall that $S_2 \wr S_n$ acts
on the tensor algebra $R \langle u_1, \dots, u_n \rangle$ via signed
permutations on the span of the $u_i$, and similarly on $R \langle v_1,
\dots, v_n \rangle$. Explicitly, write every element $g \in S_2 \wr S_n$
as: $g = D_g \sigma_g$, where $D_g = D_g^{-1}$ is a diagonal matrix with
$(i,i)$ diagonal entry $(D_g)_i \in \{ \pm 1 \}$, and $\sigma_g \in S_n$
is a permutation matrix. Now:
\begin{equation}\label{Eaction0}
g \cdot \sum_{i=1}^n r_i u_i := \sum_{i=1}^n r_i \cdot
(D_g)_{\sigma_g(i)} \cdot u_{\sigma_g(i)}
= \sum_{j=1}^n r_{\sigma_g^{-1}(j)} \cdot (D_g)_j \cdot u_j,
\end{equation}
and this is extended multiplicatively to $R \langle \bu \rangle$.
Moreover, this action preserves the two-sided ideals generated by
\[
\{ u_i \otimes u_j - u_j \otimes u_i : 1 \leq i,j \leq n \}, \qquad
\{ u_i \otimes u_j + u_j \otimes u_i : 1 \leq i < j \leq n \} \sqcup \{
u_i \otimes u_i : 1 \leq i \leq n \}.
\]

Hence denoting the free $R$-module $U := \sum_{i=1}^n R u_i$, the action
of $S_2 \wr S_n$ on $R \langle \bu \rangle$ descends to the quotient
symmetric algebra ${\rm Sym}_R^\bullet (U)$ and quotient alternating
algebra $\wedge_R^\bullet (U)$, via:
\begin{align}\label{Esymmalt}
\begin{aligned}
g(\bu^\bm) = &\ \prod_{i=1}^n ((D_g)_{\sigma_g(i)} \cdot
u_{\sigma_g(i)})^{m_i},\\
g( u_{i_1} \wedge \cdots \wedge u_{i_d} ) = &\ \prod_{j=1}^d
(D_g)_{\sigma_g(i_j)} \cdot ( u_{\sigma_g(i_1)} \wedge \cdots \wedge
u_{\sigma_g(i_d)} ),
\end{aligned}
\end{align}
for all non-negative integer tuples $\bm = (m_1, \dots, m_n)$ and all
integers $1 \leq i_1 < \cdots < i_d \leq n$.
Here and below, we define and use $\bu^\bm := \prod_{i=1}^n u_i^{m_i}$.
Notice that this action preserves each graded component
\begin{equation}\label{Egraded}
{\rm Sym}_R^d(\bu) := {\rm Sym}_R^d(U), \qquad
\wedge_R^d(\bu) := \wedge_R^d(U), \qquad d \geq 0.
\end{equation}

The above holds verbatim for the $v_j$, i.e.\ for the free $R$ module $V
:= \sum_{j=1}^n R v_j$. This action carries over to the tensor algebra $R
\langle \bv \rangle$ and its symmetric/alternating quotients.

Now fix a finite subgroup $G \leq S_2 \wr S_n$, an irreducible complex
character $\chi$ of $G$, and a commutative $R_\chi$-algebra $R$. Recall
that one has the ``minimal'' pseudo-idempotent in the group algebra
\[
E_\chi := \chi(1) \sum_{g \in G} \chi(g) g^{-1} \in
R_\chi G
\]
(see~\eqref{Echarring}), where ``pseudo-idempotent'' simply means that
$E_\chi^2 = |G| E_\chi$ in $R_\chi$ (and hence in $R_\psi$ for any class
function $\psi$ with $[\psi : \chi] \neq 0$). Below, we will act by
$E_\chi$ on both the $u_i$ and on the $v_j$, and so to keep track of
which variables are acted upon, we denote by $E_\chi^\bu$ the
pseudo-idempotent acting on $R \langle \bu \rangle$, and hence on each
$\bu^{\bf m}$ and each $\wedge_{j=1}^k u_{i_j}$ via~\eqref{Esymmalt}.
Similarly, $E_\chi^\bv$ will denote the pseudo-idempotent acting on $R
\langle \bv \rangle$ and hence on its quotients.

Our goal is to extend Theorem~\ref{Tsymm} to all characters of all finite
subgroups $G$ of $S_2 \wr S_n$. To do so, we will make these
pseudo-idempotents act on polynomials in both $u_i$ and $v_i$, i.e.\ on
the polynomial ring $R[\bu,\bv]$. More precisely, we work in its ``Segre
subring'' (see~\eqref{Egraded})
\begin{equation}\label{Esegre-sym}
\bigoplus_{d \geq 0} {\rm Sym}_R^d(\bu) \otimes {\rm Sym}_R^d(\bv)
\hookrightarrow {\rm Sym}^{2d}_R(U \oplus V).
\end{equation}
Note from above that $R \langle \bu, \bv \rangle = T^\bullet_R(U \oplus
V)$ is indeed a $(S_2 \wr S_n) \times (S_2 \wr S_n)$-submodule, as is its
quotient ${\rm Sym}^d_R(U \oplus V)$ for each $d \geq 0$ and hence the
Segre subring. In particular, the pseudo-idempotents $E_\chi^\bu$ act on
it as explained in~\eqref{Eaction} and by fixing all factors of $v_j$:
\[
E_\chi^\bu(\bu^\bm \bv^{\bm'}) := E_\chi^\bu(\bu^\bm) \cdot \bv^{\bm'} =
\chi(1) \sum_{g \in G} \chi(g) g^{-1}(\bu^\bm) \cdot \bv^{\bm'},
\]
and similarly for the $E_\chi^\bv$.

\subsection{The main theorem and its proof, for even variables}

Having defined the Segre subring and the action of the two
$E_\chi^\bullet$ on it, we can state the promised generalization of
Theorem~\ref{Tsymm} to all subgroups $G \leq S_2 \wr S_n$ and all
characters -- in fact, class functions -- $\psi$ of $G$. We begin with
$\psi$ a multiplicity-free character:

\begin{theorem}\label{Teven0}
Fix an integer $n \geq 1$, a subgroup $G \leq S_2 \wr S_n$ acting on the
variables $u_i$ (and on the $v_j$) by signed permutations, and a
multiplicity-free complex character $\chi$ of $G$. Then for any
commutative $R_\chi$-algebra $R$ -- see~\eqref{Echarring} -- and any
formal power series $f \in R[[t]]$ (with $t$ an indeterminate), one has:
\begin{equation}\label{Egeneral0}
|G| E_\chi^\bu \cdot \prod_{i=1}^n f(t u_i v_i) =
|G| E_{\overline{\chi}}^\bv \cdot \prod_{i=1}^n f(t u_i v_i) =
\sum_{{\bf m} \in \Z_{\geq 0}^n} t^{|{\bf m}|} f_{\bf m} \cdot
E_\chi^\bu(\bu^\bm) \cdot E_{\overline{\chi}}^\bv(\bv^\bm),
\end{equation}
where the indeterminate $t$ keeps track of the $\Z_{\geq 0}$-grading, we
use the multi-index notation
\[
\bm = (m_1, \dots, m_n), \quad |{\bf m}| = m_1 + \cdots + m_n, \quad
f_{\bf m} := \prod_i f_{m_i}, \quad
\bu^{\bf m} := \prod_i u_i^{m_i}, \quad
\bv^{\bf m} := \prod_i v_i^{m_i},
\]
and $\bm \geq {\bf 0}$ is interpreted coordinatewise.
\end{theorem}

Observe that special cases of Equation~\eqref{Egeneral0} yield Cauchy's
determinantal formula, its analogue for permanents and immanants (for the
power series $f_0(t) = 1/(1-t)$), and their generalizations to arbitrary
power series. E.g. for $\chi$ the sign and trivial representation
respectively (and $G = S_n$ for $n \geq 2$), the $G$-immanants have
``orthogonal'' expansions, respectively:
\begin{align}
n! \det f[ t \bu \bv^T ] = &\ n!\, V(\bu) V(\bv) \sum_{{\bf m} \in
\Z_{\geq 0}^n} t^{|{\bf m}| + \binom{n}{2}} \prod_{i=1}^n f_{m_i + n-i}
\cdot s_{\bf m}(\bu) s_{\bf m}(\bv),\label{Edet}\\
n! \perm f[ t \bu \bv^T ] = &\ \sum_{{\bf m} \in
\Z_{\geq 0}^n} t^{|{\bf m}|} f_{\bf m} \cdot \perm(\bu^{\circ {\bf m}})
\perm(\bv^{\circ {\bf m}}) \label{Eperm}\\
= &\ n! \sum_{{\bf m} \in \Z_{\geq 0}^n, \ {\bf m} \text{
non-increasing}} t^{|{\bf m}|} |{\rm Stab}_{S_n}({\bf m})| f_{\bf m}
\cdot m_{\bf m}(\bu) m_{\bf m}(\bv), \label{Eperm2}
\end{align}
for an arbitrary formal power series $f(t)$.
(Here $m_\bm(\bu)$ denotes the monomial symmetric polynomial.)
These equalities hold over an arbitrary unital commutative ring, and
hence one can work over $R = \mathbb{Q}[{\bf X}]$ a suitable polynomial
ring, cancel $n!$ from all of these, then observe the ``normalized''
identity over the subring $R = \Z[{\bf X}]$, and finally, specialize the
variables ${\bf X}$ to show these equalities over any unital commutative
ring $R$.

Before proving Theorem~\ref{Teven0}, we make two observations. The first
extends the theorem to all complex (finite-dimensional) characters of
$G$. Even more generally:

\begin{cor}
Setting as in Theorem~\ref{Teven0}. Let $\psi = \sum_{\chi \in
\widehat{G}_\mathbb{C}} a_\chi \chi$ be any complex class function of
$G$, and $R_\psi$ the corresponding ring as in~\eqref{Echarring}.
Defining
\[
E_\psi^\bu := \sum_{\chi \in \widehat{G}_\mathbb{C}} a_\chi E_\chi^\bu
\]
and similarly $E_{\overline{\psi}}^\bv$ (where we set $\overline{\psi} :=
\sum_\chi a_\chi \, \overline{\chi}$ -- using $a_\chi$ and not
$\overline{a}_\chi$), we have
\begin{equation}\label{Eclass}
|G| E_\psi^\bu \cdot \prod_{i=1}^n f(t u_i v_i) =
|G| E_{\overline{\psi}}^\bv \cdot \prod_{i=1}^n f(t u_i v_i) = \sum_{\chi
\in \widehat{G}_\mathbb{C}} a_\chi
\sum_{{\bf m} \in \Z_{\geq 0}^n} t^{|{\bf m}|} f_{\bf m} \cdot
E_\chi^\bu(\bu^\bm) \cdot E_{\overline{\chi}}^\bv(\bv^\bm)
\end{equation}
for an arbitrary $R_\psi$-algebra $R$ and any $f \in R[[t]]$ (and bosonic
variables $u_i, v_j$).
\end{cor}

While this identity is more general than~\eqref{Egeneral0}, it also is an
immediate consequence of it, by linearity. In fact, the proof (below) of
the first equality in~\eqref{Egeneral0} will also carry over verbatim
to~\eqref{Eclass}.

Our next remark explains why -- for $G = S_n$ or $S_2 \wr S_n$ -- the
above identities~\eqref{Egeneral0}, \eqref{Eclass} in fact hold over all
rings.

\begin{remark}\label{Ranyring}
Returning to~\eqref{Egeneral0}, two pleasing special cases are when $G =
S_n$ and $G = S_2 \wr S_n$ (the type $A$ and $B$ Weyl groups,
respectively). In this case, Springer showed \cite{Springer} that all
irreducible complex $G$-representations can in fact be constructed over
$\mathbb{Q}$. In particular, all character values are integers (since
they are algebraic integers and rational), and so $R_\chi = \Z$. Thus,
for $G$ the Weyl group of type $A$ or $B$, one has the immanant
identity~\eqref{Egeneral0} -- and hence the class function
identity~\eqref{Eclass} -- over arbitrary unital commutative rings.
\end{remark}

We now show the above theorem. The proof has two ingredients: the first
explains a key property of signed permutation matrices, when acting on
symmetric functions in two sets of variables.

\begin{lemma}\label{Lkeylemma}
Fix an integer $n \geq 1$ and a unital commutative ring $R$. Given a
signed permutation matrix $g = D \cdot \sigma \in GL_n(R)$ (see the lines
before~\eqref{Eaction0}), denote its action on the $u_i$
via~\eqref{Eaction0} by $g^\bu$, and similarly define $g^\bv$. These
extend to actions on the ring of polynomials $R[u_1, \dots, u_n, v_1,
\dots, v_n]$ and on its Segre subring. Then for every symmetric
polynomial $F$ in $n$ variables, and all signed permutations $g$, we have:
\[
g^\bu \cdot F(u_1 v_1, \dots, u_n v_n) = (g^{-1})^\bv \cdot F(u_1 v_1,
\dots, u_n v_n), \qquad \forall F \in R[w_1, \dots, w_n]^{S_n}, \ g \in
S_2 \wr S_n.
\]
\end{lemma}

\noindent We defer the proof as we will also show the converse result, in
Theorem~\ref{Tsignedperm} below.

With Lemma~\ref{Lkeylemma} in hand, we can complete the proof of the
theorem above.

\begin{proof}[Proof of Theorem~\ref{Teven0}]
We begin with an arbitrary power series $f(t) = \sum_{m \geq 0} f_m t^m
\in R[[t]]$, and assert the equation
\begin{equation}\label{Etrivial0}
\prod_{i=1}^n f(t u_i v_i) = \sum_{{\bf m} \in \Z_{\geq 0}^n}
t^{|{\bf m}|} f_{\bf m} \bu^{\bf m} \bv^{\bf m}.
\end{equation}

Notice that Equation~\eqref{Etrivial0} is (a) obvious, and (b) precisely
the sought-for identity (Equation~\eqref{Egeneral0}) corresponding to the
trivial group $G = \{ 1 \}$.

We now return to the original setting of a general subgroup $G \leq S_2
\wr S_n$ acting on the $u_i$ and on the $v_j$ via signed
permutations~\eqref{Eaction0} -- and a multiplicity-free (complex)
character $\chi$ of $G$. Working in the Segre subring~\eqref{Esegre-sym},
apply the operators $E_\chi^\bu$ and $E_{\overline{\chi}}^\bv$ to the
above equation~\eqref{Etrivial0}.\footnote{If $G = S_n$, then this
precisely yields the corresponding immanant of the matrix $f[t \bu \bv^T
]$.}
We now claim that both operations yield equal expressions on the
left-hand side by reindexing. Indeed, apply Lemma~\ref{Lkeylemma} with
\[
F(u_1 v_1, \dots, u_n v_n) = \prod_{i=1}^n f(t u_i v_i) \in R[u_1,
\dots, u_n, v_1, \dots, v_n][[t]].
\]
This yields the following calculation -- e.g.\ in each $t$-degree
separately:
\begin{align*}
E_{\overline{\chi}}^\bv \cdot \prod_{i=1}^n f(t u_i v_i) = &\
\overline{\chi}(1) \sum_{h = g^{-1} \in G} \overline{\chi}(h^{-1})
(g^{-1})^\bv \cdot F(u_1 v_1, \dots, u_n v_n)\\
= &\ \chi(1) \sum_{g \in G} \overline{\chi(g)} g^\bu \cdot F(u_1 v_1,
\dots, u_n v_n) = E_\chi^\bu \cdot \prod_{i=1}^n f(t u_i v_i),
\end{align*}
since $\overline{\chi(g)} = \chi(g^{-1})$ for all $g \in G$.

This implies that both operations yield the same result \textit{on the
right-hand side of~\eqref{Etrivial0} as well}.
In particular, $E_\chi^\bu \cdot E_{\overline{\chi'}}^\bv = 0$ when
acting on~\eqref{Etrivial0}, for irreducible complex characters $\chi
\neq \chi'$, since
\[
E_\chi^\bu \cdot E_{\overline{\chi'}}^\bv \cdot \sum_{{\bf m} \in
\Z_{\geq 0}^n} t^{|{\bf m}|} f_{\bf m} \bu^{\bf m} \bv^{\bf m} =
E_\chi^\bu \cdot E_{\overline{\chi'}}^\bv \cdot \prod_{i=1}^n f(t u_i
v_i) = E_\chi^\bu E_{\chi'}^\bu \cdot \prod_{i=1}^n f(t u_i v_i),
\]
and this vanishes by character orthogonality. This implies that
$E_\chi^\bullet$ is also pseudo-idempotent for $\chi$ multiplicity-free
($E_\chi^2 = |G| E_\chi$),
and so applying either $|G| E_\chi^\bu$ or $|G| E_{\overline{\chi}}^\bv$
to the left-hand side of~\eqref{Etrivial0} is the same as applying
$E_\chi^\bu \cdot E_{\overline{\chi}}^\bv$:
\[
|G| E_\chi^\bu \cdot \prod_{i=1}^n f(t u_i v_i) = (E_\chi^\bu)^2 \cdot
\prod_{i=1}^n f(t u_i v_i) = E_\chi^\bu \cdot E_{\overline{\chi}}^\bv
\prod_{i=1}^n f(t u_i v_i).
\]
Therefore, the same observation applies to the right-hand side
of~\eqref{Etrivial0} -- which yields the result.
\end{proof}

\subsection{Larger linear groups do not work}\label{Srootsof1}

We now explain -- as promised above -- why Theorem~\ref{Teven0} does not
extend to other finite subgroups $G \leq GL_n$. The proof of
Theorem~\ref{Teven0} was in three steps:
\begin{enumerate}
\item Lemma~\ref{Lkeylemma}, which says that the actions of
\[
g^\bu,\ (g^{-1})^\bv : R[w_1, \dots, w_n]^{S_n} \to \bigoplus_{d \geq 0}
{\rm Sym}^d_R(\bu) \otimes {\rm Sym}^d_R(\bv)
\]
are the same, where $w_i = u_i v_i$ -- \textbf{if} $g$ is a signed
permutation.

\item This implies that the actions of $E_\chi^\bu$ and
$E_{\overline{\chi}}^\bv$ are the same on the symmetric function
$\prod_{i=1}^n f(t u_i v_i)$, for any multiplicity-free character $\chi$
of any $G \leq S_2 \wr S_n$.

\item Now the pseudo-idempotence of $E_\chi^\bu$ and
$E_{\overline{\chi}}^\bv$ implies the result.
\end{enumerate}

Given Theorem~\ref{Teven0}, it is now natural to ask if this result can
be extended from $G \leq S_2 \wr S_n$ to any finite matrix subgroup $G$
of $GL_n(R)$. In greater detail: first note that the action of $S_n$ (or
$S_2 \wr S_n$) on the free $R$-module $U$ extends to that of matrices $g
= (m_{ij})_{i,j=1}^n \in GL_n$ via:
\begin{equation}\label{Eaction}
g \cdot \sum_{j=1}^n r_j u_j = \sum_{i=1}^n \left( \sum_{j=1}^n m_{ij}
r_j \right) u_i.
\end{equation}
In turn, this $GL_n$-action extends to all of $R \langle \bu \rangle$ by
multiplicativity, and then descends to a $GL_n(R)$-action on the quotient
algebras ${\rm Sym}^\bullet_R(\bu), \wedge^\bullet_R(\bu)$. These remarks
apply equally to $V$ and $U \oplus V$ in place of $U$, and then one can
ask if Theorem~\ref{Tsymm} extends to characters of a finite subgroup $G
\leq GL_n(R)$ that need not be contained in $S_2 \wr S_n$.

Here we show two negative results. The first is a converse to
Step~(1), and shows that over an integral domain, the conclusion of
Lemma~\ref{Lkeylemma} holds only for signed permutations:

\begin{theorem}\label{Tsignedperm}
Fix a unital commutative ring $R$, an integer $n \geq 1$, and bosonic
indeterminates $u_i, v_i$ for $i=1,\dots,n$. Given an element $g \in
GL_n(R)$, each of the following assertions implies the next:
\begin{enumerate}
\item $g$ is a signed permutation: $g \in S_2 \wr S_n$.

\item $g^\bu \cdot F(u_1 v_1, \dots, u_n v_n) = 
(g^{-1})^\bv \cdot F(u_1 v_1, \dots, u_n v_n)$
for all symmetric functions $F \in R[w_1, \dots, w_n]^{S_n}$.

\item $g^\bu \cdot F(u_1 v_1, \dots, u_n v_n) = 
(g^{-1})^\bv \cdot F(u_1 v_1, \dots, u_n v_n)$
for the $n$ elementary symmetric functions
\[
e_k({\bf w}) = \sum_{1 \leq i_1 < i_2 < \dots < i_k \leq n} w_{i_1}
w_{i_2} \cdots w_{i_k}, \qquad 1 \leq k  \leq n.
\]
\end{enumerate}

\noindent If moreover $R$ is an integral domain, then all assertions are
equivalent.
\end{theorem}

\begin{remark}
The group $S_2 \wr S_n$ of signed permutations affords several attractive
properties over the reals $R = \R$, i.e.\ as a subgroup of $GL_n(\R)$. In
addition to its irreducible representations being constructible over
$\mathbb{Q}$ (being the type $B$ Weyl group; see Remark~\ref{Ranyring}),
signed permutation matrices enjoy characterizations in multiple fields.
In linear algebra, they are precisely the orthogonal matrices with
integer entries. In analysis, as a special case of the Banach--Lamperti
theorem, they coincide with the linear isometries of the $p$-norms
$(\mathbb{R}^n, \| \cdot \|_p)$ for each $p \in [1,\infty] \setminus \{ 2
\}$. Now our Theorem~\ref{Tsignedperm} provides a ``symmetric function''
characterization in $GL_n(R)$ of the signed permutation matrices -- over
any integral domain $R$ -- that is novel to the best of our knowledge.
\end{remark}

\begin{proof}[Proof of Theorem~\ref{Tsignedperm}]
We show a cyclic chain of implications, starting with $(1) \implies (2)$
(which was Lemma~\ref{Lkeylemma}). Write $g = D \sigma$, where $D$ is a
diagonal matrix with $(i,i)$ entry $\varepsilon_i \in \{ \pm 1 \}$. Now,
\[
g^\bu \cdot F(u_1 v_1, \dots, u_n v_n) = D^\bu \sigma^\bu \cdot F(u_1
v_1, \dots, u_n v_n) = F \left( ( \varepsilon_{\sigma(i)} u_{\sigma(i)}
v_i)_{i=1}^n \right),
\]
whereas
\[
(g^{-1})^\bv \cdot F(u_1 v_1, \dots, u_n v_n) = (\sigma^{-1})^\bv
(D^{-1})^\bv \cdot F \left( (u_j v_j)_{j=1}^n \right) = F \left( (
u_j \varepsilon_j^{-1} v_{\sigma^{-1}(j)} )_{j=1}^n \right).
\]
Now permute the arguments here via: $j = \sigma(i)$, and use that
$\varepsilon_j^{-1} = \pm 1 = \varepsilon_j$ together with the symmetry
of $F$, to conclude the proof.

Clearly, $(2) \implies (3)$. Now we suppose $R$ is an integral domain,
say with quotient field $\F$, and show that $(3) \implies (1)$.
We begin by recalling an observation on elementary symmetric functions
that is required in this proof. Suppose an infinite field $\mathbb{K}$
contains pairwise distinct elements $w_1, w_2, \dots, w_n$ and pairwise
distinct elements $w'_1, w'_2, \dots, w'_n$, whose elementary symmetric
functions agree:
\[
e_1({\bf w}) = w_1 + \cdots + w_n = w'_1 + \cdots + w'_n = e_1({\bf w}'),
\quad e_2({\bf w}) = e_2({\bf w}'), \quad \dots, \quad e_n({\bf w}) =
e_n({\bf w}').
\]
Then the polynomials $(x-w_1) \cdots (x-w_n)$ and $(x-w'_1) \cdots
(x-w'_n)$ coincide in $\mathbb{K}[x]$, hence so do their sets of roots in
the field $\mathbb{K}$ -- i.e., $\{ w_i : 1 \leq i \leq n \} = \{ w'_i :
1 \leq i \leq n \}$. We will apply this observation presently, with the
(infinite) field being $\mathbb{K}' := \F(u_1, \dots, u_n, v_1, \dots,
v_n)$.

Returning to the proof, let $g = (m_{ij})_{i,j=1}^n \in GL_n(R)$, and
denote $\varepsilon := \det(g) \in R^\times$. Also write the adjugate
matrix of $g$ as ${\rm adj}(g) = (a_{ij})_{i,j=1}^n$, where $a_{ij}$
equals $(-1)^{i+j}$ times the $(j,i)$-minor of $g$. In particular,
$g^{-1} = (\varepsilon^{-1} a_{ij})_{i,j=1}^n$.
Now compute, for $F$ running over the elementary symmetric polynomials in
$n$ variables:
\begin{align*}
g^\bu \cdot F(u_1 v_1, \dots, u_n v_n) = &\ F \left( \left( v_i
\textstyle{\sum_{j=1}^n} m_{ji} u_j \right)_{i=1}^n \right),\\
(g^{-1})^\bv \cdot F(u_1 v_1, \dots, u_n v_n) = &\ F \left( \left( u_i
\varepsilon^{-1} \textstyle{\sum_{j=1}^n} a_{ji} v_j \right)_{i=1}^n \right).
\end{align*}

We now apply the above observation applied to $\mathbb{K}'$; notice this
is possible because as $g = (m_{ij})$ is invertible, no row or column is
zero, and so the $i$th argument of $g^\bu \cdot F$ is a nonzero multiple
of $v_i$ in $\F[u_1, \dots, u_n, v_1, \dots, v_n]$, but not of any other
$v_j$. Similarly for the arguments of $(g^{-1})^\bv \cdot F$. Hence by
the above observation, there exists a permutation $\sigma \in S_n$ such
that
\[
v_{\sigma(i)} \sum_{j=1}^n m_{j \sigma(i)} u_j = u_i \varepsilon^{-1}
\sum_{j=1}^n a_{ji} v_j, \qquad \forall 1 \leq i \leq n.
\]

But this is possible in the rational function field $\mathbb{K}'$ only if
the coefficients of every $u_r v_s$ are equal on both sides. Thus,
$m_{j\sigma(i)} = 0$ whenever $j \neq i$ (so $g$ is necessarily a
``generalized permutation matrix''). Moreover, equating the coefficients
of $u_i v_{\sigma(i)}$ on both sides yields:
\begin{equation}\label{Etemp}
m_{i \sigma(i)} = \varepsilon^{-1} a_{\sigma(i) i} = \det(g)^{-1}
a_{\sigma(i) i}.
\end{equation}

Now expand the determinant of $g$ along its $\sigma(i)$-column -- where
we saw above that only the $i$th entry is nonzero. Thus,
\[
\varepsilon = \det(g) = \sum_{j=1}^n m_{j \sigma(i)} a_{\sigma(i) j} =
m_{i \sigma(i)} a_{\sigma(i) i} \in R^\times.
\]
Substituting this in~\eqref{Etemp} finally gives us:
\[
m_{i \sigma(i)} = \det(g)^{-1} a_{\sigma(i) i} = m_{i \sigma(i)}^{-1}
\quad \implies \quad m_{i \sigma(i)} = \pm 1.
\]

Thus, $g = (m_{ij})$ is a matrix with exactly one nonzero entry in each
row and column, and each such entry is $\pm 1$. Hence $g \in S_2 \wr
S_n$, which shows $(3) \implies (1)$.
\end{proof}

Theorem~\ref{Tsignedperm} shows that over an integral domain, Step~(1) at
the start of this discussion (i.e.\ of Section~\ref{Srootsof1}) fails to
hold for any matrix $g$ that is not a signed permutation.
In order to check if the proof of Theorem~\ref{Teven0} still goes through
for larger matrix groups $G$, the next strategy to attempt would be to
directly show Step~(2) without showing Step~(1). Namely, we would compute
-- and equate -- the actions of $|G| E_\chi^\bu, |G|
E_{\overline{\chi}}^\bv$ by summing over the entire group $G$, instead of
using individual terms $g^\bu, (g^{-1})^\bv$ and reindexing via $h =
g^{-1}$.

Unfortunately, this strategy also fails -- already in ``small'' cases:

\begin{example}\label{Excuberoot}
As a simple (counter)example, we now show that~\eqref{Egeneral0} fails
for $n=1$ and
\[
G = \{ \exp(2 \pi k i / 3) : k = 0,1,2 \} \cong
\Z / 3 \Z \hookrightarrow S^1 \leq \mathbb{C}^\times,
\]
for a certain character. (Here, we work over $R = \mathbb{C}$.) More
generally, let $d \geq 3$ and
\[
G = \{ \exp(2 \pi k i / d) : k = 0, 1, \dots, d-1 \} \cong \Z / d \Z
\hookrightarrow S^1 \leq \mathbb{C}^\times.
\]
Let the character $\chi(g) := g \in G$, and write $\zeta := \exp(2 \pi i
/ d)$. Now $E_\chi^{u_1}$ acts on $f(t u_1 v_1) = \sum_{m \geq 0} f_m t^m
u_1^m v_1^m$ via:
\begin{align*}
E_\chi^{u_1} \cdot f(t u_1 v_1) = &\ \sum_{g \in G} \sum_{m=0}^\infty f_m
t^m v_1^m \cdot \chi(g^{-1}) (g u_1)^m\\
= &\ \sum_{m=0}^\infty t^m \cdot f_m u_1^m v_1^m \cdot \sum_{a \in \Z / d
\Z} \zeta^{a(m-1)}.
\end{align*}

Similarly,
\[
E_{\overline{\chi}}^{v_1} \cdot f(t u_1 v_1) = \sum_{m=0}^\infty t^m
\cdot f_m u_1^m v_1^m \cdot \sum_{a \in \Z / d \Z} \zeta^{a(m+1)},
\]
and for $d \geq 3$, this series differs from the preceding (formal) power
series in $(2/d)$ths of the coefficients: for all $m \equiv \pm 1 \mod
d$. \qed
\end{example}

Theorem~\ref{Tsignedperm} and Example~\ref{Excuberoot} explain our choice
of working with $G \leq S_2 \wr S_n$ in formulating and proving the main
results: larger groups do not work even for small $n$.

\subsection{Further characterizations of the signed permutation matrices}

For completeness, here we provide some explanation for why
Lemma~\ref{Lkeylemma} and Theorem~\ref{Tsignedperm} characterize the
group of signed permutations. Suppose $R = \mathbb{F}$ is an infinite
field; we will examine the action of the group $GL_n(\F)$ on the vector
space $\mathcal{M}^{n \times n}$, where $\mathcal{M} \cong U \otimes V$
is the $\F$-span of the $n^2$ vectors $u_i v_j$.

The first point is that any $g \in GL_n(\F)$ acts on all $u_i \in U$ via
$g^\bu$, which is simply matrix multiplication on $\bu$, and hence on
$\bu \bv^T$. The action of $g^\bv$ on $V$ is similar -- but in this
instance it is from the right, hence via its transpose; thus to make it a
valid group action, we need to apply yet another anti-involution. It
follows that the two actions $g \mapsto g^\bu$ and $g \mapsto
((g^{-1})^T)^\bv$ are indeed group actions, when thought of as acting on
$\mathcal{M}^{n \times n}$ from the left and the right, respectively.
Thus, Proposition~\ref{Pcharacterize} below serves as an explanation for
why signed permutations are related to $g^\bu$ acting on $\bu$ and
$((g^{-1})^T)^\bv$ acting on $\bv^T$, as in Theorem~\ref{Tsignedperm}.

Second, via the observation in the proof of $(3) \implies (1)$ of
Theorem~\ref{Tsignedperm}, we are now interested classifying the $g \in
GL_n(\F)$ for which the diagonal entries in $g^\bu (\bu \bv^T)$ and in
$(\bu \bv^T) ((g^{-1})^T)^\bv$ coincide (up to permutation). Such $g$ are
characterized by Theorem~\ref{Tsignedperm} to be precisely the signed
permutation matrices. As we now explain, the connection to symmetric
function theory is essentially ``solely'' via the aforementioned
observation. In other words, modulo this fact one obtains a ``linear
algebra'' characterization of $S_2 \wr S_n \leq GL_n(\F)$.\medskip

To explain, begin by fixing $g \in GL_n(\mathbb{F})$. Instead of asking
that $g^\bu$ and $((g^{-1})^T)^\bv$ fix the set of diagonal entries of
all matrices in $\mathcal{M}^{n \times n}$, we work alternately with rank
one matrices $u v^T \in \F^{n \times n}$ instead of $\bu \bv^T \in
\mathcal{M}^{n \times n}$. (Note that since the ground field is infinite,
(symmetric) polynomials are the same as (symmetric) polynomial
functions.) Since $g$ is fixed, by relabelling $v$ to $v_g := gv$ one can
ask to classify those $g$ such that the diagonal entries of
\[
g \cdot (u(gv)^T) = g \cdot u v^T \cdot g^T \qquad \text{and of} \qquad
(u(gv)^T) \cdot (g^{-1})^T = uv^T
\]
agree as (multi)sets, for all vectors $u,v \in \F^n$. By linearity, this
would imply the same fact with $u v^T$ replaced by any matrix $A \in
\F^{n \times n}$. Characterizing such $g$ is the content of our next
result, shown not just over $\mathbb{F}^{n \times n}$, but again in the
generality of Theorem~\ref{Tsignedperm}:

\begin{proposition}\label{Pcharacterize}
Fix a unital commutative ring $R$ and an integer $n \geq 1$. Given an
element $g \in GL_n(R)$, each of the following assertions implies the
next:
\begin{enumerate}
\item $g$ is a signed permutation: $g \in S_2 \wr S_n$.

\item For all $A \in R^{n \times n}$, $g A g^T$ and $A$ have the same
multisets of diagonal entries.

\item For all $1 \leq i \leq n$, the matrices $g E_{ii} g^T$ and $E_{ii}$
have the same multisets of diagonal entries, where $E_{ii}$ is the matrix
with $(i,i)$ entry $1$ and all other entries $0$.
\end{enumerate}

\noindent If moreover $R$ is an integral domain, then all assertions are
equivalent.
\end{proposition}

Note that one can replace any ``intermediate'' subset
\[
\{ E_{11}, \dots, E_{nn} \} \, \subset \, S \, \subset \, R^{n \times n}
\]
in the above characterization of $S_2 \wr S_n \leq GL_n(R)$ for $R$ an
integral domain. (Also, as the following proof shows, we require from the
ground ring $R$ not that it is an integral domain, but only that the
square roots in $R$ of $0$ and $1$ are $\{ 0 \}$ and $\{ \pm 1 \}$,
respectively.)

\begin{proof}
We show a cyclic chain of implications. If $(1)$ holds, write $g = D
\sigma$; then $g^{-1} = g^T = \sigma^T D$. Since $D$ has diagonal entries
$\pm1$, it is easy to check that $D A D$ and $A$ have the same multisets
of diagonal entries. So do $\sigma A \sigma^T$ and $A$, and so~$(2)$
follows.

Clearly, $(2) \implies (3)$. We now assume $(3)$, as well as that $R$ is
an integral domain, and show $(1)$ -- in two ways. The first is by direct
computation: write the columns of $g \in GL_n(R)$ as: $g = [v_1 | \cdots
| v_n]$, with each $v_j$ having entries $g_{ij} \in R$. Now if $D$ is a
diagonal matrix with $(i,i)$ entry $d_i \in R$, then a direct computation
reveals:
\begin{equation}\label{Ecalculation}
g D g^T = \sum_{j=1}^n d_j v_j v_j^T.
\end{equation}
In particular, $g E_{jj} g^T = v_j v_j^T$, whose diagonal entries are $\{
g_{ij}^2 : 1 \leq i \leq n \}$. By hypothesis, this multiset equals $\{
1, 0, \dots, 0 \}$, so there exists a map $\sigma : \{ 1, \dots, n \} \to
\{ 1, \dots, n \}$ such that $v_j$ equals the standard basis vector
$e_{\sigma(j)}$ or else $-e_{\sigma(j)}$. Since $g$ is invertible,
$\sigma$ has to be injective, hence $\sigma \in S_n$, and so $g \in S_2
\wr S_n$ as desired.

The second proof of $(3) \implies (1)$ is slightly different, and starts
by considering the diagonal entries in~\eqref{Ecalculation}: the $(i,i)$
entry is:
\[
(g D g^T)_{ii} = \sum_{j=1}^n d_j g_{ij}^2.
\]
For each $D = E_{i'i'}$, this yields a system of $n$ equations, which can
be stated using the coefficient matrix $(g_{ij}^2)$ applied to the
standard coordinate basis vector $e_{i'} \in R^n$. Collecting these
systems together for all $i'$ yields
\[
\begin{pmatrix}
g_{11}^2 & g_{12}^2 & \cdots & g_{1n}^2 \\
g_{21}^2 & g_{22}^2 & \cdots & g_{2n}^2 \\
\vdots & \vdots & \ddots & \vdots \\
g_{n1}^2 & g_{n2}^2 & \cdots & g_{nn}^2
\end{pmatrix} 
\cdot [ \, e_1 \, | \, e_2 \, | \, \cdots \, | \, e_n \, ] = 
[ \, e_{\sigma(1)} \, | \, e_{\sigma(2)} \, | \, \cdots \, | \,
e_{\sigma(n)} \, ]
\]
for some map $\sigma : \{ 1, \dots, n \} \to \{ 1, \dots, n \}$. Thus
for each $j$, $g_{ij}^2 = 0$ for $i \neq \sigma(j)$, and
$g_{\sigma(j)j}^2 = 1$ in $R$. Hence $g_{\sigma(j)j} \in \{ \pm 1 \}$ for
each $j$. Now if $\sigma$ is not a bijection then $\sigma(j) =
\sigma(j')$ for some $1 \leq j \neq j' \leq n$, in which case the $j,j'$
columns of $g \in GL_n(R)$ are proportional. This is impossible, so
$\sigma$ is a bijection and each $g_{\sigma(j)j} = \pm 1$. Thus $g \in
S_2 \wr S_n$, as desired.
\end{proof}

\section{Immanant identities for fermionic variables}

Theorem~\ref{Teven0} holds in the case of even/bosonic variables, i.e.,
where the $u_i, v_j$ all commute among themselves. Our next result is an
``odd''/fermionic analogue of Theorem~\ref{Teven0}, in which the $u_i$ and
$v_j$ pairwise anti-commute: $u_i u_j = - u_j u_i$, and similarly for
$v_i, v_j$ and for $u_i, v_j$. (In particular, we also require $u_j^2 =
v_j^2 = 0$ for $j \geq 1$.) We continue to work with a subgroup $G \leq
S_2 \wr S_n$, and with a power series over some $R_\psi$-algebra when
dealing with a complex class function $\psi$ of $G$. We also fix an
ordering of the fermionic variables $u_i$ and the same one for the $v_i$,
say increasing indices $(1, 2, \dots, n)$.

Since $u_j^2 = v_j^2 = 0\ \forall j \geq 1$, without loss of generality
$f(t) = f_0 + f_1 t$ is linear, and so the fermionic analogue of
Equation~\eqref{Etrivial0} is
\begin{equation}\label{Eodd}
\prod_{j=1}^n f(t u_j v_j) = \sum_{J \subset [n]} (-1)^{\binom{|J|}{2}}
f_0^{n-|J|} (f_1 t)^{|J|} \bu^J \bv^J,
\end{equation}

\noindent where $[n] := \{ 1, \dots, n \}$,
the power of $(-1)$ emerges from ``taking the $u_i$ past the $v_j$'', and
we use the notation
\[
\bu^J = \prod_{j \in J} u_j, \quad \bv^J = \prod_{j \in J} v_j.
\]

As in the case of even variables (and forgetting the role of $t$),
Equation~\eqref{Eodd} takes place inside the alternating algebra, or more
precisely, inside the $G \times G$-submodule
\[
\bigoplus_{d \geq 0} \wedge^d_R(\bu) 
\otimes \wedge^d_R(\bv) \subset \wedge{}_R^\bullet(M^1)
\]
(see the discussion around~\eqref{Esymmalt}), where $M^1$ is the free
$R$-module with basis $\{ u_1, \dots, u_n, \ v_1, \dots, v_n \}$. (Here
one begins with the aforementioned $(S_2 \wr S_n) \times (S_2 \wr
S_n)$-module structure on $T_\mathbb{C}^\bullet(M^1)$.)
In particular, the two pseudo-idempotents $E_\chi^\bu, E_\chi^\bv$ act on
this alternating algebra. Now we claim that
applying $E_\chi^\bu$ or $E_{\overline{\chi}}^\bv$ to the left-hand side
of Equation~\eqref{Eodd} yields the same expression.
To see why, note that the proof of Lemma~\ref{Lkeylemma} goes through for
fermionic variables as well, the key being that $u_i \cdot v_j$ is even
for all $i,j$, hence polynomials in these are well-defined.

But this implies the same result on the right-hand sides of~\eqref{Eodd}
-- as in the bosonic case. Since $E_\chi^\bu, E_{\overline{\chi}}^\bv$
are pseudo-idempotents, this implies the sought-for ``fermionic''
immanant identity for irreducible characters -- which we extend by
linearity as in~\eqref{Eclass}:

\begin{theorem}\label{Todd}
Fix an integer $n \geq 1$, a subgroup $G \leq S_2 \wr S_n$ (acting on
$u_i, v_j$ by signed permutations), and a complex class function $\psi =
\sum_{\chi \in \widehat{G}_\mathbb{C}} a_\chi \chi$ of $G$.
Let $R_\psi$ be the corresponding ring as in~\eqref{Echarring}, and $R$
an $R_\psi$-algebra.
Working with fermionic variables $u_i, v_j$, and for $f \in R[[t]]$ an
arbitrary formal power series with $t$ an indeterminate, one has:
\begin{align}\label{Egeneral-odd}
\begin{aligned}
|G| E_\psi^\bu \cdot \prod_{j=1}^n f(t u_j v_j) = &\
|G| E_{\overline{\psi}}^\bv \cdot \prod_{j=1}^n f(t u_j v_j)\\
= &\ \sum_{\chi \in \widehat{G}_\mathbb{C}} a_\chi
\sum_{J \subset [n]} (-1)^{\binom{|J|}{2}} f_0^{n-|J|} (f_1 t)^{|J|}
E_\chi^\bu (\bu^J) \cdot E_{\overline{\chi}}^\bv (\bv^J).
\end{aligned}
\end{align}
\end{theorem}

As in the bosonic case, a prominent special case is that of $G = S_n$,
with $\chi$ the sign and trivial representations. By
Remark~\ref{Ranyring}, we may work over any commutative ring $R$.
Since $x_{ij} := u_i v_j$ is still an even variable for all $1 \leq i,j
\leq n$, the $x_{ij}$ commute pairwise and so one can expand the
determinant (and permanent) along any row or column. The expansions turn
out to be ``mirror images'' that involve only the two largest or the two
smallest powers of $t$:

\begin{proposition}\label{Podd}
Fix an integer $n \geq 2$, a unital commutative ring $R$, and $f_0, f_1
\in R$. Given odd variables $u_i, v_j$ for $1 \leq i,j \leq n$ as above,
\begin{align}\label{Edetodd}
&\ \det (f_0 + f_1 t u_i v_j)_{i,j=1}^n \notag\\
= &\ t^n (-1)^{\binom{n}{2}} n!
f_1^n \cdot u_1 \cdots u_n \cdot v_1 \cdots v_n\\
& + t^{n-1} (-1)^{\binom{n-1}{2}} (n-1)! f_0 f_1^{n-1} \cdot
\sum_{i=1}^n (-1)^{i-1} u_1 \cdots \widehat{u_i} \cdots u_n \cdot
\sum_{j=1}^n (-1)^{j-1} v_1 \cdots \widehat{v_j} \cdots v_n.\notag
\end{align}
Similarly, the permanent of the above matrix equals
\begin{equation}\label{Epermodd}
\perm (f_0 + f_1 t u_i v_j)_{i,j=1}^n = n! f_0^n + (n-1)!
f_0^{n-1} f_1 t (u_1 + \cdots + u_n)(v_1 + \cdots + v_n).
\end{equation}
\end{proposition}

\begin{remark}
Notice that this is not immediately connected to the even-variable case,
since if one specializes the equation in~\eqref{Edet} to $G = S_n$,
$\chi$ the sign representation, and $f(t) = f_0 + f_1 t$, then already
for $n \geq 3$ all products in~\eqref{Edet} vanish, so we simply get zero
there.
\end{remark}

\begin{proof}[Proof of Proposition~\ref{Podd}]
While one can verify the claimed identities via explicit computations, we
show them as corollaries of the ``master immanant
identity''~\eqref{Egeneral-odd}, by computing the $J$-summand for every
$J \subset [n]$. Since we work over any commutative $R$, we may assume
without loss of generality that $R$ has characteristic zero (see the
discussion after~\eqref{Eperm2}).

We begin with the determinant~\eqref{Edetodd}, i.e.\ with $\chi$ the sign
character. Here, the term $E_\chi^\bu(\bu^J)$ equals a signed sum
over permutations (the Laplace expansion), and one has three cases:
\begin{itemize}
\item When $J = [n]$, $E_\chi^\bu(\bu^J)$ yields $n!$ copies
of $(f_1 t)^n$ times the \textit{ordered} product $u_1 \cdots u_n$, and
similarly for the $v_j$. Now dividing by $n!$ (since the left-hand side
of~\eqref{Egeneral-odd} equals $n! \det (f(t u_i v_j))$) yields the first
term on the right in~\eqref{Edetodd}.

\item For each $J \subset [n]$ with $|J| = n-1$, a similar analysis shows
that the $n!$ terms in $E_\chi^\bu(\bu^J)$ yield (with equal
multiplicities) the ordered products $(-1)^{i-1} u_1 \cdots \widehat{u_i}
\cdots u_n$, each with multiplicity $(n-1)!$. This also happens for
$E_\chi^\bv(\bv^J)$; and there are $\binom{n}{n-1}$ choices of such
subsets $J$. Thus the overall multiplicity for the term
in~\eqref{Edetodd} of $t$-degree $|J|=n-1$ is $n \cdot (n-1)!^2$. Now
divide by $n!$ as above; this yields $(n-1)!$, and hence the second term
on the right in~\eqref{Edetodd}.

\item Finally, fix $J \subset \{ 1, \dots, n \}$ with $|J| \leq n-2$.
Also fix $i < i'$ in $[n] \setminus J$. Then $E_\chi^\bu(\bu^J)$ is a
signed sum over $\sigma \in S_n$, which can be rewritten over the cosets
of the reflection subgroup $\{ 1, (i,i') \}$. As multiplying by the
transposition $(i,i')$ reverses sign, this signed sum vanishes for every
$J$ of size at most $n-2$.
\end{itemize}

This shows the determinant formula~\eqref{Edetodd}. We now turn to the
permanent~\eqref{Epermodd}; the proof here is similar and we write it out
for completeness. Once again, there are three cases for $J \subset [n]$:
\begin{itemize}
\item When $J = \emptyset$, $E_\chi^\bu(\bu^J) = E_\chi^\bu(1) = n!$, and
similarly for $E_\chi^\bv(\bv^J)$. Now divide by 
$n!$ (since the left-hand side of~\eqref{Egeneral-odd} equals $n!$ times
the permanent) to obtain the $n! f_0^n$ term in the claimed expansion.

\item When $J = \{ j \}$ is a singleton, a similar computation shows that
$E_\chi^\bu(u_j)$ yields $(n-1)!$ copies of each $u_i$. Add all of these
terms, then do the same for the action of $E_\chi^\bv$ on $v_j$
(throughout, $J = \{ j \}$ is fixed). Now summing over all $j$, and
dividing by $n!$ as above, we get $(n-1)! f_0^{n-1} f_1 t (u_1 + \cdots +
u_n)(v_1 + \cdots + v_n)$.

\item Finally, if $|J| \geq 2$, we choose $j<j'$ in $J$, and write 
$E_\chi^\bu(\bu^J)$ as the (unsigned) sum over $\sigma \in S_n$, which
can be rewritten over the cosets of the reflection subgroup $\{ 1, (j,j')
\}$. As multiplying by the transposition $(j,j')$ reverses sign, this
unsigned average vanishes for every $J$ of size at least $2$. \qedhere
\end{itemize}
\end{proof}

For completeness, we conclude this part with a fermionic counterpart of
two related results for bosonic variables -- Cauchy's \textit{product}
identities:
\begin{equation}\label{Eprodeven}
\prod_{i,j} \frac{1}{1 - u_i v_j} = \sum_\bm s_\bm(\bu) s_\bm(\bv),
\qquad
\prod_{i,j} (1 + u_i v_j) = \sum_\bm s_\bm(\bu) s_{\bm'}(\bv),
\end{equation}
where $\bm'$ is the dual partition to $\bm$. In the fermionic case, since
$u_i^2 = v_j^2 = 0$, the two left-hand expressions coincide:

\begin{proposition}
Fix integers $n \geq 2$ and $k \geq 1$.
Given odd variables $u_i, v_j$ for $1 \leq i,j \leq n$,
\begin{equation}\label{Eprododd}
\prod_{i,j=1}^n \left( \frac{1}{1 - t u_i v_j} \right)^k =
\prod_{i,j=1}^n (1 + t u_i v_j)^k = 1 + k t (u_1 + \cdots + u_n)
(v_1 + \cdots + v_n).
\end{equation}
\end{proposition}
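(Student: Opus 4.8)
The plan is to first collapse the two left-hand products of \eqref{Eprododd} into one, and then to evaluate that product by a two-step ``row-by-row'' factorization that exploits the nilpotency forced by anticommutation.

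First I would record that each $u_i v_j$ is a product of two odd generators, hence is \emph{even}, so the factors $1 + t u_i v_j$ commute pairwise and the products are unambiguous regardless of the order of multiplication. Moreover $(u_i v_j)^2 = u_i v_j u_i v_j = -u_i^2 v_j^2 = 0$, using $v_j u_i = -u_i v_j$ together with $u_i^2 = v_j^2 = 0$ (valid since the characteristic is not $2$). Consequently $(1 - t u_i v_j)^{-1} = 1 + t u_i v_j$, and the two left-hand sides of \eqref{Eprododd} coincide. It therefore suffices to evaluate $\prod_{i,j=1}^n (1 + t u_i v_j)$.

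Next I would factor the product one row at a time. Fixing $i$ and expanding $\prod_{j=1}^n (1 + t u_i v_j)$ over subsets of the $j$-indices, every monomial with at least two factors contains a product $(u_i v_j)(u_i v_{j'}) = -u_i^2 v_j v_{j'} = 0$; hence this inner product collapses to $1 + t u_i \sum_{j} v_j$. Writing $w := v_1 + \cdots + v_n$, this gives $\prod_{i,j}(1 + t u_i v_j) = \prod_{i}(1 + t u_i w)$. The key observation is then that $w$ is odd and squares to zero: $w^2 = \sum_{j,j'} v_j v_{j'} = 0$, since $v_j^2 = 0$ and $v_j v_{j'} = -v_{j'} v_j$. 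Because $w u_{i'} = -u_{i'} w$, we get $(u_i w)(u_{i'} w) = -u_i u_{i'} w^2 = 0$, so expanding $\prod_{i}(1 + t u_i w)$ once more kills all terms with two or more factors, leaving $1 + t\,(u_1 + \cdots + u_n)\,w = 1 + t\,(u_1 + \cdots + u_n)(v_1 + \cdots + v_n)$, as claimed.

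I expect no serious obstacle; the only point needing care is the sign bookkeeping under anticommutation (the relations $v_j u_i = -u_i v_j$ and $w u_{i'} = -u_{i'} w$), which is exactly what forces $w^2 = 0$ and annihilates the cross terms. For contrast, a brute-force expansion of $\prod_{i,j}(1 + t u_i v_j)$ over subsets $S \subseteq [n] \times [n]$ also works: each monomial vanishes unless $S$ is the support of a partial permutation, and for $|S| = k \geq 2$ the surviving monomials, grouped by their fixed row and column sets and summed over the $k!$ bijections between them, carry a factor $\sum_{\tau \in S_k} (-1)^{\ell(\tau)} = 0$ --- the same Muir identity \cite{Muir} invoked in the proof of Proposition~\ref{Pperm}. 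The two-step factorization above is simply the efficient route to this cancellation.
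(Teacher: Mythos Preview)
Your proof is correct, and your primary argument---the two-step row-by-row factorization via the auxiliary odd element $w = \sum_j v_j$ with $w^2 = 0$---is genuinely different from the paper's approach. The paper expands $\prod_{i,j}(1 + t u_i v_j)$ directly as a polynomial in $t$, observes that any monomial of $t$-degree $k$ with $2 \leq k \leq n$ must have the form $(-1)^{\binom{k}{2}} u_{i_1}\cdots u_{i_k}\, v_{j_1}\cdots v_{j_k}$ with $i_1 < \cdots < i_k$ and the $j_l$ distinct, and then cancels these in pairs by swapping $j_1 \leftrightarrow j_2$. This is essentially the brute-force alternative you sketch at the end (your phrasing via the Muir identity $\sum_{\tau \in S_k}(-1)^{\ell(\tau)} = 0$ is slightly more than is needed; the paper just uses the single transposition $(1\,2)$ to pair terms). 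Your main route is cleaner: by collapsing each row first to $1 + t u_i w$ and then using the nilpotency $w^2 = 0$, you never have to track signs across a $k$-fold expansion, and the cancellation becomes a structural fact about odd elements rather than a combinatorial pairing. The paper's approach, on the other hand, makes the parallel with the proof of Proposition~\ref{Pperm} more visible.
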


Notice the similarity to~\eqref{Epermodd}. In fact, a similar identity
holds for the more general product of factors $(f_0 \pm t f_1 u_i
v_j)^{\pm 1}$, and we leave the details to the interested reader.

\begin{proof}
As mentioned above, the first equality holds because all variables are
fermionic. We now prove the second equality; in doing so, note that all
terms $t u_i v_j$ are even, and hence commute pairwise. 
Since moreover all $u_i^2 = v_j^2 = 0$, hence proving the second
equality for $k=1$ implies it for higher $k$ by the binomial theorem.
Thus, we assume henceforth that $k=1$. Now viewing this product as a
polynomial in $t$, the constant term is $1$, and there are $2^{n^2}$
terms/monomials, each of which has $t$-degree at most $n^2$. In any
monomial of $t$-degree $> n$, the pigeonhole principle yields a factor of
a $u_i^2$ and a $v_j^2$, both of which vanish. Next, the linear terms in
$t$ clearly add up to $t (u_1 + \cdots + u_n) (v_1 + \cdots + v_n)$.

It remains to show that the coefficient of $t^k$ vanishes, for all $2
\leq k \leq n$. For convenience, we multiply the even
factors $(1 + u_i v_j)$ in lexicographic order $(1,1), (1,2), \dots,
(n,n)$. Then the coefficient of $t^k$ consists of terms of the form
\[
u_{i_1} v_{j_1} \cdots u_{i_k} v_{j_k} = (-1)^{\binom{k}{2}} u_{i_1}
\cdots u_{i_k} \cdot v_{j_1} \cdots v_{j_k},
\]
where $1 \leq i_1 < \cdots < i_k \leq n$ (since $u_i^2 = 0$ for all $i$)
and $j_1, \dots, j_k$ are pairwise distinct. It is now easy to see that
this term is obtained in multiple ways, where one can pair the tuple
$(j_1, j_2, \dots, j_k)$ with $(j_2, j_1, \dots, j_k)$ -- and this
procedure pairs off the terms into couples with opposite signs. Thus, the
sum of all of these terms vanishes. Proceeding in this fashion, all
quadratic and higher order terms in $t$ vanish, proving the result.
\end{proof}

Given the theory of symmetric functions, a natural follow-up
to these fermionic and bosonic Cauchy product identities is the
\textit{nonsymmetric} analogue of the bosonic identity, see \cite[Theorem
1.1 and Section 3]{Sahi}. We leave it to the interested reader to explore
if there exists a fermionic counterpart to \textit{loc.\ cit.}

\subsection{The case of $\varepsilon$-commuting sets of odd/even
variables}

In the preceding set of formulas, the two sets of variables $u_j, v_k$
were all odd/fermionic -- whereas they were all even/bosonic in an
earlier section. As a consequence, in both of these settings the
variables $x_{ij} = u_i v_j$ commute in both of these settings, which
makes the determinant well-defined regardless of how one expands it out.

In this concluding subsection (which is essentially an expanded remark),
we derive analogous identities  in a slightly more general setting. The
point is to draw attention to a parameter that is implicit in the
calculations in both of the above settings: the proportionality constant
$\varepsilon$ that one obtains when moving any $u$ past any $v$. In the
case of even/odd variables, we had specialized this parameter to equal
the scalar $\varepsilon = \pm 1$, respectively. However, the computations
in fact hold for arbitrary choice of $\varepsilon$ in \textit{either}
setting, because the power of this scalar merely keeps track of how many
$u$ move past how many $v$. Thus, similar to the variable $t$ that keeps
track of the common homogeneity degree in the $u$'s and the $v$'s
(separately), we now introduce another ``bookkeeping''
\textit{indeterminate} $\varepsilon$, which ends up keeping track of the
same information -- but now via the number of exchanges of $u$'s and
$v$'s. Notice, however, that the terms $x_{ij} = u_i v_j$ still pairwise
commute, so that $\det f[\bu \bv^T]$ stays well-defined.

Thus, we now write down the ``more general'' formulas in the above two
settings; the proofs are identical. In the case of \textbf{bosonic} $u_i$
and $v_j$, if moreover
\[
\varepsilon u_i v_j = v_j u_i, \quad \forall 1 \leq i,j \leq n,
\]
then given a subgroup $G \leq S_2 \wr S_n$ and a multiplicity-free
complex character $\chi$ of $G$, first fix an $R_\chi$-algebra $R$. Now
we work over the polynomial ring $R[\varepsilon]$, in the quotient of
$T^\bullet_{R[\varepsilon]}(U \oplus V)$ by the two-sided ideal generated
by
\[
\{ u_i \otimes u_j - u_j \otimes u_i, \ v_i \otimes v_j - v_j \otimes
v_i, \quad v_j \otimes u_i - \varepsilon \, u_i \otimes v_j \ : \ 1 \leq
i, j \leq n \};
\]
notice this ideal is ($S_2 \wr S_n) \times (S_2 \wr S_n)$-stable. Then
for any $f \in R[[t]]$ (or even $f \in R[\varepsilon][[t]]$),
\begin{equation}\label{Ebookkeeping1}
|G| E_\chi^\bu \cdot \prod_{i=1}^n f(t u_i v_i) =
|G| E_{\overline{\chi}}^\bv \cdot \prod_{i=1}^n f(t u_i v_i) = 
\sum_{{\bf m} \in \Z_{\geq 0}^n}
\varepsilon^{\binom{|\bm|}{2}} t^{|\bm|} f_{\bf m} \cdot
E_\chi^\bu (\bu^\bm) \cdot E_{\overline{\chi}}^\bv (\bv^\bm).
\end{equation}

Specializing to $G = S_n$ and $\chi$ the sign character (and canceling
$n!$ as in the discussion following~\eqref{Eperm}),
\begin{equation}
\det f[ t \bu \bv^T ] = \sum_{{\bf m} \in \Z_{\geq 0}^n, \
{\bf m} \text{decreasing}} \varepsilon^{\binom{|{\bf m}|}{2}}
t^{|{\bf m}| + \binom{n}{2}} \prod_{i=1}^n f_{m_i + n-i}
\cdot V(\bu) s_{\bf m}(\bu) \cdot V(\bv) s_{\bf m}(\bv).
\end{equation}

Similarly, in the case of \textbf{fermionic} $u_i$ and $v_j$, if moreover
$\varepsilon u_i v_j = v_j u_i\ \forall i,j$, the analogous formula is:
\begin{equation}\label{Ebookkeeping2}
|G| E_\chi^\bu \cdot \prod_{j=1}^n f(t u_j v_j) =
|G| E_{\overline{\chi}}^\bv \cdot \prod_{j=1}^n f(t u_j v_j) =
\sum_{J \subset [n]} \varepsilon^{\binom{|J|}{2}} f_0^{n-|J|} (f_1
t)^{|J|} \cdot E_\chi^\bu (\bu^J) \cdot E_{\overline{\chi}}^\bv (\bv^J)
\end{equation}

\noindent for arbitrary $G \leq S_2 \wr S_n$, any multiplicity-free
character $\chi$ of $G$, any $R_\chi$-algebra $R$, and any $f \in
R[[t]]$. Again specializing to $G = S_n$ and $\chi$ the sign character,
\begin{align}
&\ \det (f_0 + f_1 t u_i v_j)_{i,j=1}^n \notag\\
= &\ \varepsilon^{\binom{n}{2}} t^n n!  f_1^n \cdot
u_1 \cdots u_n \cdot v_1 \cdots v_n\\
& + \varepsilon^{\binom{n-1}{2}} t^{n-1} (n-1)! f_0 f_1^{n-1} \cdot
\sum_{i=1}^n (-1)^{i-1} u_1 \cdots \widehat{u_i} \cdots u_n \cdot
\sum_{j=1}^n (-1)^{j-1} v_1 \cdots \widehat{v_j} \cdots v_n.\notag
\end{align}

Similar formulas hold for the permanents, in both the bosonic and
fermionic settings. Moreover, the ``master
identities''~\eqref{Ebookkeeping1} and~\eqref{Ebookkeeping2} immediately
extend to arbitrary complex class functions $\psi$ of $G$, by linearity.

\section*{Acknowledgments}

We thank the referees for their detailed reading and valuable comments,
which led to a significant improvement of the paper.
A.K.\ was partially supported by Ramanujan Fellowship grant
SB/S2/RJN-121/2017, MATRICS grant MTR/2017/000295, and SwarnaJayanti
Fellowship grants SB/SJF/2019-20/14 and DST/SJF/MS/2019/3 from SERB and
DST (Govt.~of India), by grant F.510/25/CAS-II/2018(SAP-I) from UGC
(Govt.~of India), and by a Young Investigator Award from the Infosys
Foundation.
S.S. was partially supported by Simons Foundation grant 509766,
and NSF grants DMS-1939600 and DMS-2001537.




\end{document}